\newcommand{\norm}[1]{\left\Vert#1\right\Vert}
\newcommand{\abs}[1]{\left\vert#1\right\vert}
\theoremstyle{plain}
\newtheorem{theorem}{Theorem}
\newcommand{\beq}{\begin{equation}}
\newcommand{\eeq}{\end{equation}}
\def\C{\mathbb{C}}
\def\N{\mathbb{N}}
\def\e{\varepsilon}
\def\Supp{\operatorname{\sf{supp}}}
\begin{document}

\title[Stereotype approximation for the algebra ${\mathcal C}(M)$]{Stereotype approximation property for the algebras ${\mathcal C}(M)$ of continuous functions on metric spaces}

\author{S.S.Akbarov}
 \address{School of Applied Mathematics, National Research University Higher School of Economics, 34, Tallinskaya St. Moscow, 123458 Russia}
 \email{sergei.akbarov@gmail.com}
 \keywords{stereotype algebra, refinement, envelope}

\thanks{Supported by the RFBR grant No. 18-01-00398.}

\maketitle

\begin{abstract}
In this note we prove that the algebra ${\mathcal C}(M)$ of continuous functions on an arbitrary complete (not necessarily locally compact) metric space $M$ has the stereotype approximation property.
\end{abstract}

In \cite{Akbarov-approx,Akbarov-2003} the author described the stereotype approximation property, an analog of the classical approximation property transferred into the category ${\tt Ste}$ of stereotype spaces.
Like the classical approximation, the stereotype approximation is used in the description of properties of spaces of operators (and in this way the stereotype approximation possesses some advantages, since in contrast to the classical approximation the stereotype approximation is inherited by spaces of operators \cite[Theorem 9.9]{Akbarov-2003}). This justifies the study of this notion, but the question of which concrete spaces in the standard package used in functional analysis have the stereotype approximation turns out to be much more difficult than in the classical theory, since the spaces of operators in the category ${\tt Ste}$ are defined in a more complicated way.

For this reason, each result on the stereotype approximation of a particular space is of interest, as it requires the development of new techniques.
In line of these studies the author showed in \cite{Akbarov-approximation-C(G)} that the group algebra $\mathcal C^\star(G)$ of measures on an arbitrary locally compact group $G$, as well as its dual algebra ${\mathcal C}(G)$ of continuous functions on $G$, have the stereotype approximation. Whether the same is true for the algebra ${\mathcal C}(M)$ of continuous functions on an arbitrary paracompact locally compact space $M$, even in the case of compact $M$, remains an open problem. In this note, we give an answer to this question for the case of an arbitrary complete (not necessarily locally compact) metric space $M$.

Let $M$ be a complete metric space with the distance $d$. Let us denote by ${\mathcal C}_\natural(M)$ the algebra of continuous functions $f:M\to\C$ with the usual pointwise multiplication and the topology of uniform convergence on compact sets in $M$. In other words, the topology on ${\mathcal C}_\natural(M)$ is defined by the system of seminorms
\beq\label{norm(f)_T=max_t-abs(f(t))}
\norm{f}_T=\max_{t\in T}\abs{f(t)},
\eeq
where $T$ runs over the set of all compact sets in $M$. The multiplication operation in ${\mathcal C}_\natural(M)$ is a continuous bilinear form in the sense of the theory of stereotype spaces: for each neighbourhood of zero $U\subseteq {\mathcal C}_\natural(M)$ and for each totally bounded set $F\subseteq {\mathcal C}_\natural(M)$ there is a neighbourhood of zero $V\subseteq {\mathcal C}_\natural(M)$ such that $V\cdot F=F\cdot V\subseteq U.$ By \cite[Theorem 5.23]{Akbarov-2003} this implies that under the pseudosaturation $\vartriangle$ \cite[$\S$ 1.4]{Akbarov-2003} the multiplication operation remains a continuous bilinear form in the same sense. As a corollary, the pseudosaturation ${\mathcal C}_\natural(M)^\vartriangle$ of the space ${\mathcal C}_\natural(M)$ is a stereotype algebra. Below by ${\mathcal C}(M)$ we denote this algebra ${\mathcal C}_\natural(M)^\vartriangle$:
$$
{\mathcal C}(M):={\mathcal C}_\natural(M)^\vartriangle.
$$

\begin{theorem}\label{TH:C(M)-obl-approx-kogda-M-metricheskoe}
For each complete metric space $M$ the algebra ${\mathcal C}(M)$ has the stereotype approximation property.
\end{theorem}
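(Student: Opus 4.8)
The plan is to establish the stereotype approximation property for $\mathcal C(M)$ by constructing an explicit net of finite-rank operators on $\mathcal C(M)$ converging to the identity operator $\mathrm{id}_{\mathcal C(M)}$ in the stereotype space of operators $\mathcal C(M)\oslash\mathcal C(M)$. A basic neighbourhood of $\mathrm{id}_{\mathcal C(M)}$ in the topology of uniform convergence on totally bounded sets is determined by a totally bounded set $F\subseteq\mathcal C(M)$ together with a neighbourhood of zero, which for the topology given by the seminorms $\norm{\cdot}_T$ may be taken in the form $\{g:\norm{g}_T<\varepsilon\}$ with $T\subseteq M$ compact and $\varepsilon>0$. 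Thus the core construction is: given such $F$, $T$ and $\varepsilon$, to produce a finite-rank operator $P\colon\mathcal C(M)\to\mathcal C(M)$ with $\norm{Pf-f}_T\le\varepsilon$ for every $f\in F$.

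Here the metric of $M$ is used decisively. Since the identity map $\mathcal C(M)\to\mathcal C_\natural(M)$ is continuous, $F$ is totally bounded in $\mathcal C_\natural(M)$, hence its restriction $F|_T$ is totally bounded in the Banach space $C(T)$; as $T$ is a compact metric space, the Arzel\`a--Ascoli theorem shows that $F|_T$ is equicontinuous, so there is $\delta>0$ such that $\abs{f(s)-f(t)}<\varepsilon$ for all $f\in F$ whenever $s,t\in T$ and $d(s,t)\le\delta$. Choose a finite $\tfrac{\delta}{2}$-net $t_1,\dots,t_n\in T$ of $T$. Using the functions $x\mapsto d(x,t_i)$ one builds a subordinate partition of unity $\varphi_1,\dots,\varphi_n\in\mathcal C(M)$ with $0\le\varphi_i\le1$, with $\varphi_i$ vanishing outside the ball $\{x:d(x,t_i)<\delta\}$, and with $\sum_{i=1}^n\varphi_i\equiv1$ on $T$; since $M$ need not be locally compact one cannot use compactly supported bumps and some care is required — one forms the barycentric combinations $\psi_i/\sum_j\psi_j$ of the bumps $\psi_i(x)=\max\{0,\tfrac{\delta}{2}-d(x,t_i)\}$ on the open set where the denominator is positive, which contains $T$, and multiplies them by a Urysohn cutoff supported in a small neighbourhood of $T$ so that they extend continuously by zero to all of $M$. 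Setting
\beq
Pf=\sum_{i=1}^n f(t_i)\,\varphi_i,\qquad f\in\mathcal C(M),
\eeq
one obtains a continuous finite-rank operator on $\mathcal C(M)$: each evaluation $f\mapsto f(t_i)$ is a continuous linear functional on $\mathcal C_\natural(M)$, hence on the finer space $\mathcal C(M)$, and $\varphi_i\in\mathcal C(M)$. The estimate is then immediate: for $f\in F$ and $t\in T$, since $\sum_i\varphi_i(t)=1$,
\[
Pf(t)-f(t)=\sum_{i=1}^n\bigl(f(t_i)-f(t)\bigr)\varphi_i(t),
\]
and a summand can be nonzero only when $\varphi_i(t)\ne0$, i.e. when $d(t,t_i)<\delta$ with $t,t_i\in T$, in which case $\abs{f(t_i)-f(t)}<\varepsilon$; hence $\abs{Pf(t)-f(t)}\le\varepsilon$, that is $Pf-f\in\{g:\norm{g}_T\le\varepsilon\}$ for all $f\in F$.

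This construction is in essence the classical one for $C(K)$, and it is not where the difficulty lies; indeed the reason the analogous question for a non-metrizable compact $M$ remains open is precisely that there one lacks this mechanism of finite $\delta$-nets combined with equicontinuity. The real obstacle is that $\mathcal C(M)=\mathcal C_\natural(M)^\vartriangle$ carries the pseudosaturated topology, which in general is strictly finer than the one given by the seminorms $\norm{\cdot}_T$ and has a larger dual, so that the space of operators $\mathcal C(M)\oslash\mathcal C(M)$ carries the full stereotype topology rather than merely the topology of uniform convergence on totally bounded sets; one must therefore check that the net of operators $P$ above actually converges to $\mathrm{id}_{\mathcal C(M)}$ in this finer operator topology. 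I expect this bookkeeping — not the partition of unity — to be the main work. The natural way to organise it is through the refinement and envelope formalism of \cite{Akbarov-2003}: one represents $\mathcal C(M)$ as a suitable envelope of a dense subalgebra of ``elementary'' functions controlled, up to arbitrarily small error on compact sets, by finite $\tfrac{\delta}{2}$-nets (for instance the subalgebra generated by the truncated distances $x\mapsto\max\{0,r-d(x,a)\}$ and the constants, which is dense in $\mathcal C_\natural(M)$ by the Stone--Weierstrass theorem), for which the finite-rank approximations constructed above are visibly convergent in the stereotype sense, and then transfers the stereotype approximation property along this envelope by means of the inheritance results of \cite{Akbarov-approx,Akbarov-2003}. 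Completeness of $M$ is used in this last step, while metrizability is what makes the construction above possible at all.
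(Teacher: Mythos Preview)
Your partition-of-unity construction and the estimate $\norm{Pf-f}_T\le\varepsilon$ are correct and match the paper's core computation. You also correctly locate the real difficulty: upgrading convergence from the topology of uniform convergence on totally bounded sets to the pseudosaturated operator topology $\mathcal L(\mathcal C(M))=(\mathcal C(M){:}\mathcal C(M))^\vartriangle$. But your proposed resolution via ``envelopes of a dense subalgebra'' is not an argument: no such inheritance theorem is stated in \cite{Akbarov-approx,Akbarov-2003}, and the assertion that the finite-rank approximations are ``visibly convergent in the stereotype sense'' on the subalgebra runs into exactly the same obstacle you are trying to get around. This part of the proposal is a placeholder, not a proof.

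The paper's device is different and hinges on a structural point your construction discards. Your operators $P=P_{F,T,\varepsilon}$ are indexed by an uncountable directed set, so even after you show $P\to\mathrm{id}$ in $\mathcal C(M){:}\mathcal C(M)$ there is no reason for the net to be totally bounded there, and hence no way to invoke the key fact \cite[Theorem~1.17]{Akbarov-2003} that pseudosaturation does not change the topology on totally bounded sets. The paper instead fixes, for each $k\in\mathbb N$, a \emph{global} locally finite partition of unity $\{\eta^k_t\}_{t\in M}$ subordinate to the cover by $\tfrac1k$-balls and sets $P^kf=\sum_t f(t)\,\eta^k_t$. Now $P^k$ depends only on $k$, hence is a \emph{sequence}; once $P^k\to\mathrm{id}$ in $\mathcal C(M){:}\mathcal C(M)$ is established (by the same equicontinuity estimate you wrote down), the set $\{P^k\}\cup\{\mathrm{id}\}$ is automatically compact, pseudosaturation leaves its topology unchanged, and $P^k\to\mathrm{id}$ in $\mathcal L(\mathcal C(M))$ follows. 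The price is that $P^k$ is no longer of finite rank, so a separate argument---again showing the partial sums form a totally bounded family and applying \cite[Theorem~1.17]{Akbarov-2003}---is needed to place each $P^k$ in the $\mathcal L(\mathcal C(M))$-closure of finite-rank operators. Completeness of $M$ is not used where you put it; it enters only through the background stereotype-space hypotheses, not in the approximation step.
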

\begin{proof}
1. For each $k\in\N$ consider the system $\{U_{\frac{1}{k}}(t); t\in M\}$ of open balls of radius $\frac{1}{k}$ in $M$. This is an open covering of $M$, hence it has a subordinated locally finite partition of unity $\{\eta^k_t;t\in M\}$ \cite[Theorem 5.1.3, Theorem 5.1.9]{Engelking}:
$$
\eta^k_t\in{\mathcal C}(M),\quad 0\le \eta^k_t\le 1,\quad \Supp\eta^k_t\subseteq U_{\frac{1}{k}}(t),\quad \sum_{t\in M}\eta^k_t=1.
$$
Let $M^k\subseteq M$ be the set of points $t\in M$ for which the function $\eta^k_t$ does not vanish: $t\in M^k$ $\Leftrightarrow$ $\eta^k_t\ne 0.$

2. Let us consider the sequence of operators $P^k:{\mathcal C}(M)\to {\mathcal C}(M)$ defined by the equality
\beq\label{C(M)-obl-approx-kogda-M-metricheskoe-1}
P^kf=\sum_{t\in M^k} f(t)\cdot \eta^k_t,\qquad f\in{\mathcal C}(M).
\eeq
Since the series on the right is locally finite (i.e. in a neighbourhood of each point  $x\in M$ only a finite set of its terms do not vanish), the function $P^kf\in{\mathcal C}(M)$ is well defined, and we obtain a linear map $P^k:{\mathcal C}(M)\to {\mathcal C}(M)$.

Note that this map is continuous in the topology of ${\mathcal C}_\natural(M)$. Indeed, if $\{f_i;i\to\infty\}$ is a net, tending to zero in ${\mathcal C}_\natural(M)$, i.e. uniformly on compact sets $T\subseteq M$, $\max_{x\in T}\abs{f_i(x)}\underset{i\to\infty}{\longrightarrow}0,$ then, since the set
\beq\label{M^k_T=(t-in-T:eta^k_t|_T-ne-0)}
M^k_T=\{t\in T: \eta^k_t\Big|_T\ne 0\}
\eeq
is finite, we have
\begin{multline*}
\max_{x\in T}\abs{P^kf_i(x)}=
\max_{x\in T}\abs{\sum_{t\in M^k} f_i(t)\cdot \eta^k_t(x)}=
\max_{x\in T}\abs{\sum_{t\in M^k_T} f_i(t)\cdot \eta^k_t(x)}\le\\ \le
\max_{x\in T}\sum_{t\in M^k_T}\abs{f_i(t)\cdot \eta^k_t(x)}\le
\max_{x\in T}\max_{t\in M^k_T}\abs{f_i(t)}\cdot \underbrace{\sum_{t\in M^k_T}\eta^k_t(x)}_{\scriptsize\begin{matrix}\| \\1 \end{matrix}}\le
\max_{t\in M^k_T} \abs{f_i(t)}
\underset{i\to\infty}{\longrightarrow}0.
\end{multline*}

Since the operators $P^k:{\mathcal C}_\natural(M)\to {\mathcal C}_\natural(M)$ are continuous, their pseudosaturations $(P^k)^\vartriangle:{\mathcal C}(M)^\vartriangle\to {\mathcal C}(M)^\vartriangle$, i.e. the operators $P^k:{\mathcal C}(M)\to {\mathcal C}(M)$, are continuous as well \cite[Theorem 1.16]{Akbarov-2003}.

3. Let us show further that the representation \eqref{C(M)-obl-approx-kogda-M-metricheskoe-1} of the operator $P^k$ can be understood as an expantion of $P^k$ into a converging series of one-dimensional operators in  ${\mathcal L}({\mathcal C}(M))$:
\beq\label{C(M)-obl-approx-kogda-M-metricheskoe-2}
P^k=\sum_{t\in M^k} \delta_t\odot \eta^k_t.
\eeq
($\delta_t$ is the $\delta$-functional at the point $t$). This is done in several steps. Let us first show that   \eqref{C(M)-obl-approx-kogda-M-metricheskoe-2} can be understood as the convergence of a series in the space ${\mathcal C}_\natural(M):{\mathcal C}(M)$ (we use the notations of \cite[$\S 5$]{Akbarov-2003}: according to them, ${\mathcal C}_\natural(M):{\mathcal C}(M)$ is the space of linear continuous maps $\varphi:{\mathcal C}(M)\to{\mathcal C}_\natural(M)$ with the topology of uniform convergence on compact sets). For this let us take a totally bounded set $F\subseteq {\mathcal C}(M)$ and a basis neighbourhood of zero in ${\mathcal C}_\natural(M)$, i.e. a set of the form $B_{\e}(T)=\{g\in {\mathcal C}_\natural(M): \norm{g}_T<\e\},$ where $T$ is a compact set in $M$. Recall the set $M^k_T$ defined in \eqref{M^k_T=(t-in-T:eta^k_t|_T-ne-0)}. It is finite (since the family $\eta^k_t$ is locally finite), hence for each finite set $N\supseteq M^k_T$ we have
\begin{multline*}
\sum_{t\in N} (\delta_t\odot\eta^k_t)(f)\Big|_T=
\sum_{t\in N} f(t)\cdot \eta^k_t\Big|_T=\sum_{t\in N\setminus M^k_T} f(t)\cdot \underbrace{\eta^k_t\Big|_T}_{\scriptsize\begin{matrix}\|\\ 0\end{matrix}}+\sum_{t\in M^k_T} f(t)\cdot \eta^k_t\Big|_T=\\=
\sum_{t\in M^k_T} f(t)\cdot \eta^k_t\Big|_T=P^kf\Big|_T
\end{multline*}
This implies the following chain of corollaries:
$\Big(\forall N\supseteq M^k_T\quad \forall f\in F$ $\|\sum_{t\in N} (\delta_t\odot\eta^k_t)(f)-P^kf\|_T=0\Big)$
$\Longrightarrow$
$\Big(
\forall N\supseteq M^k_T\quad \forall f\in F\quad \sum_{t\in N} (\delta_t\odot\eta^k_t)(f)-P^kf\in B_{\e}(T)\Big)
$
$\Longrightarrow$
$\Big(\forall N\supseteq M^k_T\quad \sum_{t\in N}\delta_t\odot\eta^k_t-P^k\in B_{\e}(T):F\Big)$ (here $B_{\e}(T):F$ denotes the set of operators $\varphi:{\mathcal C}(M)\to {\mathcal C}_\natural(M)$ with the property $\varphi(B_{\e}(T))\subseteq F$, see \cite[5.4]{Akbarov-2003}; by definition of the topology in ${\mathcal C}_\natural(M):{\mathcal C}(M)$, this is a basic neighbourhood of zero in  ${\mathcal C}_\natural(M):{\mathcal C}(M)$). And this proves that \eqref{C(M)-obl-approx-kogda-M-metricheskoe-2} indeed can be treated as an equality in the space ${\mathcal C}_\natural(M):{\mathcal C}(M)$.

Further, the convergence of the series in \eqref{C(M)-obl-approx-kogda-M-metricheskoe-2} in ${\mathcal C}_\natural(M):{\mathcal C}(M)$ implies that its partial sums
$P^k_N=\sum_{t\in N} \delta_t\odot \eta^k_t,$ $N\in 2_{M^k}$
($2_{M^k}$ means the set of finite subsets in $M^k$) form a totally bounded set in ${\mathcal C}_\natural(M):{\mathcal C}(M)$ \cite[Proposition 9.18]{Akbarov-2003}. By \cite[Theorem 5.1]{Akbarov-2003}, this means that the set of operators $\{P^k_N;N\in 2_{M^k}\}$ is equicontinuous on each totally bounded set $F\subseteq {\mathcal C}(M)$ and has a totally bounded image on it $\bigcup_{N\in 2_{M^k}}P^k_N(F)\subseteq {\mathcal C}_\natural(M)$. Since the system of totally bounded sets in ${\mathcal C}_\natural(M)$ and in ${\mathcal C}(M)$ is the same, and the topology on them does not change either \cite[Theorem 1.17]{Akbarov-2003}, we can conclude that $\bigcup_{N\in 2_{M^k}}P^k_N(F)$ is totally bounded in ${\mathcal C}(M)$, and $\{P^k_N;N\in 2_{M^k}\}$ is equicontinuous as a system of mappings from $F$ (with the uniform structure induced from ${\mathcal C}(M)$) into  $\bigcup_{N\in 2_{M^k}}P^k_N(F)\subseteq {\mathcal C}(M)$, or, in other words, as a system of mappings from $F$ into ${\mathcal C}(M)$. Since this is true for each totally bounded set $F\subseteq {\mathcal C}(M)$, by \cite[Theorem 5.1]{Akbarov-2003} we conclude that the system of operators $\{P^k_N;N\in 2_{M^k}\}$ is a totally bounded set in ${\mathcal C}(M):{\mathcal C}(M)$ (with the removed symbol $\natural$ in the numerator).

The closure $\overline{\{P^k_N;N\in 2_{M^k}\}}$ of this set in ${\mathcal C}(M):{\mathcal C}(M)$ is compact in ${\mathcal C}(M):{\mathcal C}(M)$. The topology of ${\mathcal C}_\natural(M):{\mathcal C}(M)$, being formally coarser than the topology of ${\mathcal C}(M):{\mathcal C}(M)$, is nevertheless Hausdorff, and thus, it separates the points of $\overline{\{P^k_N;N\in 2_{M^k}\}}$. As a corollary, these topologies coincide on the set $\overline{\{P^k_N;N\in 2_{M^k}\}}$. Therefore, the convergence of the net $\{P^k_N;N\in 2_{M^k}\}$ to the element $P^k\in \overline{\{P^k_N;N\in 2_{M^k}\}}$ in the topology of ${\mathcal C}_\natural(M):{\mathcal C}(M)$ implies the convergence of $\{P^k_N;N\in 2_{M^k}\}$ to $P^k\in \overline{\{P^k_N;N\in 2_{M^k}\}}$ in the topology of ${\mathcal C}(M):{\mathcal C}(M)$: $P^k_N\overset{{\mathcal C}(M):{\mathcal C}(M)}{\underset{N\to M}{\longrightarrow}}P^k.$

Since the set $\{P^k_N;N\in 2_{M^k}\}\cup\{P^k\}$ is totally bounded in ${\mathcal C}(M):{\mathcal C}(M)$, the passage to the pseudosaturation $\vartriangle$ does not change its topology. We can conclude that the net
$\{P^k_N;N\in 2_{M^k}\}$ converges to $P^k$ in the topology of $({\mathcal C}(M):{\mathcal C}(M))^\vartriangle={\mathcal L}({\mathcal C}(M))$:
$P^k_N\overset{{\mathcal L}({\mathcal C}(M))}{\underset{N\to M}{\longrightarrow}}P^k.$ This means that \eqref{C(M)-obl-approx-kogda-M-metricheskoe-2} holds in the space ${\mathcal L}({\mathcal C}(M))$.

4. Let us show that the operators $P^k$ approximate the identity operator $I$ in the space ${\mathcal L}({\mathcal C}(M))$:
\beq\label{C(M)-obl-approx-kogda-M-metricheskoe-5}
P^k\overset{{\mathcal L}({\mathcal C}(M))}{\underset{N\to M}{\longrightarrow}}I.
\eeq
This is also done in several steps. Again, let us take a totally bounded set $F\subseteq {\mathcal C}(M)$ and a compact set $T\subseteq M$. Then
\begin{multline*}
  \norm{P^kf-f}_T=\max_{x\in T}\abs{P^kf(x)-f(x)}=
  \max_{x\in T}\Big|\sum_{t\in M^k} f(t)\cdot \eta^k_t(x)-f(x)\Big|=\\=
  \max_{x\in T}\Big|\sum_{t\in M^k} f(t)\cdot \eta^k_t(x)-f(x)\cdot \underbrace{\sum_{t\in M^k}\eta^k_t(x)}_{\scriptsize\begin{matrix}\|\\ 1\end{matrix}}\Big|=
  \max_{x\in T}\Big|\sum_{t\in M^k} \big(f(t)-f(x)\big)\cdot \eta^k_t(x)\Big|\le \\ \le
  \max_{x\in T}\sum_{t\in M^k} \Big|f(t)-f(x)\Big|\cdot \eta^k_t(x)
  \end{multline*}
In the last sum if $\eta^k_t(x)\ne 0$, then $x\in \Supp\eta^k_t\subseteq U_{\frac{1}{k}}(t)$, hence $d(x,t)<\frac{1}{k}$. Thus,
\begin{multline}\label{C(M)-obl-approx-kogda-M-metricheskoe-3}
\norm{P^kf-f}_T\le \max_{x,s\in T}\sum_{t\in M^k} \kern-22pt \underbrace{\Big|f(t)-f(x)\Big|}_{ \scriptsize\begin{matrix}\text{\rotatebox{90}{$\ge$}} \\
\sup_{x,s\in T,\ d(s,x)< \frac{1}{k}}\Big|f(s)-f(x)\Big| \end{matrix}}\kern-22pt \cdot \eta^k_t(x)\le\\ \le
\sup_{x,s\in T,\ d(s,x)< \frac{1}{k}}\Big|f(s)-f(x)\Big|\cdot \underbrace{\sum_{t\in M^k}
\eta^k_t(x)}_{\scriptsize\begin{matrix}\|\\ 1\end{matrix}}=
\sup_{x,s\in T,\ d(s,x)< \frac{1}{k}}\Big|f(s)-f(x)\Big|
\end{multline}
Let us show that the last value tends to zero uniformly by $f\in F$:
\beq\label{C(M)-obl-approx-kogda-M-metricheskoe-4}
\sup_{f\in F}\sup_{x,s\in T,\ d(s,x)< \frac{1}{k}}\Big|f(s)-f(x)\Big|\underset{k\to\infty}{\longrightarrow}0
\eeq
Suppose that this is not true: $\sup_{f\in F}\sup_{x,s\in T,\ d(s,x)< \frac{1}{k}}\Big|f(s)-f(x)\Big|\underset{k\to\infty}{\not\longrightarrow}0$.
This means that there is a number $\e>0$ and sequences $k_n\to \infty$, $f_n\in F$, $x_n\in T$, $s_n\in U_{\frac{1}{k_n}}(x_n)$ such that $\forall n\in\N\quad \Big|f_n(s_n)-f_n(x_n)\Big|>\e.$ Since $T$ is compact, we can choose from $x_n\in T$ a converging subsequence $x_{n_i}$:
$x_{n_i}\underset{i\to\infty}{\longrightarrow}x\in T.$
Then $s_{n_i}\underset{i\to\infty}{\longrightarrow}x\in T,$
and we have the inequality $\forall i\in\N\quad \Big|f_{n_i}(s_{n_i})-f_{n_i}(x_{n_i})\Big|>\e,$
where $f_{n_i}\in F$, $x_{n_i}\to x$ and $s_{n_i}\to x$. This means that the set of functions $F$ is not equicontinuous of the compact set $\{x_{n_i}\}\cup \{s_{n_i}\}\cup\{x\}$, and therefore $F$ is not totally bounded in ${\mathcal C}(M)$ \cite[8.2.10]{Engelking}. And this contradicts to the choice of $F\subseteq {\mathcal C}(M)$.

We proved \eqref{C(M)-obl-approx-kogda-M-metricheskoe-4} and together with \eqref{C(M)-obl-approx-kogda-M-metricheskoe-3} this gives
$0\le\sup_{f\in F}\norm{P^kf-f}_T\le \sup_{f\in F}\sup_{x\in T,\ d(s,x)< \frac{1}{k}}\Big|f(s)-f(x)\Big|\underset{k\to\infty}{\longrightarrow}0,$
and therefore, $\sup_{f\in F}\norm{P^kf-f}_T\underset{k\to\infty}{\longrightarrow}0.$
This is true for each compact set $T\subseteq M$, so we can say that for each totally bounded set $F\subseteq {\mathcal C}(M)$ the net $P^kf-f$ tends to zero in the space ${\mathcal C}_\natural(M)$ uniformly by $f\in F$:
\beq\label{C(M)-obl-approx-kogda-M-metricheskoe-6}
P^kf-f\overset{{\mathcal C}_\natural(M)}{\underset{f\in F}{\underset{k\to\infty}{\rightrightarrows}}}0.
\eeq
Recall now that $P^k$ is not just a net, but a {\it sequence}. Together with \eqref{C(M)-obl-approx-kogda-M-metricheskoe-6} this gives that the set $\bigcup_{k\in \N}(P^k-I)(F)$ must be totally bounded in ${\mathcal C}_\natural(M)$. Hence the pseudosaturation $\vartriangle$ does not change the topology on $\bigcup_{k\in \N}(P^k-I)(F)\cup\{0\}$ \cite[Theorem 1.17]{Akbarov-2003}, and we can conclude that
$P^kf-f$ tends to zero in the space ${\mathcal C}_\natural(M)^\vartriangle={\mathcal C}(M)$ uniformly by $f\in F$:
\beq\label{C(M)-obl-approx-kogda-M-metricheskoe-7}
P^kf-f\overset{{\mathcal C}(M)}{\underset{f\in F}{\underset{k\to\infty}{\rightrightarrows}}}0.
\eeq
And this is true for each totally bounded set $F\subseteq {\mathcal C}(M)$. Hence, $P^k\overset{{\mathcal C}(M):{\mathcal C}(M)}{\underset{N\to M}{\longrightarrow}}I.$ Recall again that $P^k$ is a sequence. It converges to $I$ in the space ${\mathcal C}(M):{\mathcal C}(M)$, therefore the set $\{P^k\}\cup\{I\}$ is compact. As a corollary, $P^k$ tends to $I$ in the topology of the compact set $\{P^k\}\cup\{I\}\subseteq {\mathcal C}(M):{\mathcal C}(M)$. When we apply to the space ${\mathcal C}(M):{\mathcal C}(M)$ the operation of pseudosaturation $\vartriangle$, the topology on the set $\{P^k\}\cup\{I\}$ is not changed \cite[Theorem 1.17]{Akbarov-2003}. Hence we can say that $P^k$ tends to $I$ in the topology of the compact set $\{P^k\}\cup\{I\}\subseteq ({\mathcal C}(M):{\mathcal C}(M))^\vartriangle={\mathcal L}({\mathcal C}(M))$. Thus, $P^k$ tends to $I$ in the space ${\mathcal L}({\mathcal C}(M))$. In other words, \eqref{C(M)-obl-approx-kogda-M-metricheskoe-5} holds.

5. We see that the identity operator $I$ is approximated in the space ${\mathcal L}({\mathcal C}(M))$ by the operators $P^k$ (by \eqref{C(M)-obl-approx-kogda-M-metricheskoe-5}), and the operators $P^k$ are approximated in ${\mathcal L}({\mathcal C}(M))$ by finite-dimensional operators (the partial sums of the series in \eqref{C(M)-obl-approx-kogda-M-metricheskoe-2}). Thus, $I$ is approximated in ${\mathcal L}({\mathcal C}(M))$ by finite-dimensional operators, and this is the stereotype approximation property for ${\mathcal C}(M)$.
\end{proof}

\end{document}